\title{Some non-special cubic fourfolds}
\author[N.~Addington]{Nicolas Addington}
\address{Nicolas Addington \\
Department of Mathematics \\
University of Oregon \\
Eugene, Oregon 97403 \\
United States}
\email{adding@uoregon.edu}
\author[A.~Auel]{Asher Auel}
\address{Asher Auel \\
Department of Mathematics\\
Yale University\\
New Haven, Connecticut 06511\\
United States}
\email{asher.auel@yale.edu}
\newcommand \C {\mathbb C}
\newcommand \F {\mathbb F}
\renewcommand \P {\mathbb P}
\newcommand \Q {\mathbb Q}
\newcommand \Z {\mathbb Z}
\newcommand \prim {\textup{prim}}
\newcommand \Vap {\textit{V-ap}}
\newcommand \IR {\textit{IR}}
\newcommand \rkthree {\textit{rk3}}
\newcommand \copl {\textit{copl}}
\newcommand \an {\textup{an}}
\newcommand \et {\textup{\'et}}
\newcommand \sing {\textup{sing}}
\DeclareMathOperator \Sym {Sym}
\DeclareMathOperator \CH {CH}
\DeclareMathOperator \cl {cl}
\DeclareMathOperator \tr {tr}
\DeclareMathOperator \Bl {Bl}
\newtheorem {thm} {Theorem}
\newtheorem {prop} {Proposition} [section]
\begin{document}

\begin{abstract}
In \cite{rv}, Ranestad and Voisin showed, quite surprisingly, that the
divisor in the moduli space of cubic fourfolds consisting of cubics
``apolar to a Veronese surface'' is not a Noether--Lefschetz divisor.
We give an independent proof of this by exhibiting an explicit 
cubic fourfold $X$ in the divisor and using point counting methods over
finite fields to show $X$ is Noether--Lefschetz general.
We also show that two other divisors considered in \cite{rv} are not
Noether--Lefschetz divisors.
\end{abstract}

\maketitle

\section{Introduction}

In \cite{rv}, Ranestad and Voisin introduced some new divisors in the
moduli space of smooth complex cubic fourfolds, quite different from
Hassett's Noether--Lefschetz divisors \cite{hassett}.  A cubic $X
\subset \P^5$ is called \emph{special} if
\[ H^{2,2}_\prim(X,\Z) := H^4_\prim(X,\Z) \cap H^{2,2}(X) \]
is non-zero, or equivalently if $X$ contains a surface not homologous to
a complete intersection. The locus of special cubic fourfolds is a
countable union of irreducible divisors in the moduli space, called
Noether--Lefschetz divisors.  Special cubic fourfolds often have rich
connections to K3 surfaces, and it is expected that all rational cubic
fourfolds are special; see \cite{hassett_survey} for a recent survey
of the topic.

Ranestad and Voisin's divisors are constructed in a much more
algebraic way, using apolarity.  Briefly, a cubic fourfold $X$ cut
out by a polynomial $f(y_0,\dotsc,y_5)$ is said to be \emph{apolar} to
an ideal generated by quadrics,
\[ I = \langle q_1, \dotsc, q_m \rangle \subset \C[y_0,\dotsc,y_5], \]
if, writing $q_i = \sum a_{ijk}\, y_j y_k$, we have
\[ \sum a_{ijk}\, \partial_j \partial_k f = 0 \text{ for all }i. \]
Ranestad and Voisin showed that the following loci are irreducible divisors in the
moduli space of cubic fourfolds: $D_\Vap$, the set of cubics
apolar to a Veronese surface; $D_\IR$, the set of cubics
apolar to a quartic scroll; and $D_\rkthree$, the closure of the set
of cubics apolar to the union of a plane and a disjoint
hyperplane.  They showed that $D_\Vap$ is
\emph{not} a Noether--Lefschetz divisor, by carefully analyzing its
singularities.  From this they deduced that for a generic cubic $X$,
the ``varieties of sums of powers'' of the polynomial $f$, which is a
hyperk\"ahler fourfold, is not Hodge-theoretically related to
the Fano variety of lines on $X$, a better-known hyperk\"ahler
fourfold.  They remarked that $D_\rkthree$ is
``presumably'' not a Noether--Lefschetz divisor, and that if one
could prove that $D_\IR$ is not a Noether--Lefschetz divisor then it
would give another approach to proving their main theorem.

We were very surprised to learn that $D_\Vap$ is not a
Noether--Lefschetz divisor: we would have guessed that it was
Hassett's divisor $\mathcal C_{38}$, for the following reason.  Cubic
fourfolds in $\mathcal C_{38}$, which are conjectured to be rational,
have associated K3 surfaces of degree 38.  Mukai \cite{mukai} observed
that the generic such K3 surface $S$ can be described as the variety of
sums of powers of a plane sextic $g(x_0, x_1, x_2)$; see
\cite[Thm.~1.7(iii)]{rs} for a more detailed account.  A natural way to construct a
cubic fourfold from $g$ is to consider the multiplication map
\[ m\colon \Sym^3 \Sym^2 \C^3 \to \Sym^6 \C^3 \]
and its transpose
\[ m^\vee\colon \Sym^6 \C^{3\vee} \to \Sym^3 \Sym^2 \C^{3\vee}. \]
Then $m^\vee(g)$ cuts out a cubic $X \subset \P(\Sym^2 \C^{3\vee}) =
\P^5$, typically smooth.  By \cite[Lem.~1.7]{rv}, the cubics obtained
this way are exactly those in $D_\Vap$.  Though it seemed reasonable
to expect that the cubic $X$ would be Hodge-theoretically associated
with the K3 surface $S$, Ranestad and Voisin's result implies that it
cannot be.

Since the result is so surprising, and the proof quite difficult, at
least to our eyes, we thought it worthwhile to seek experimental
confirmation.  In this note, we give a computer-aided proof of the
following result, and in particular a more direct proof of Ranestad
and Voisin's result:

\begin{thm} \label{thm1} 
There is an explicit sextic polynomial $g$, defined over $\Q$,
such that the cubic fourfold $X$ cut out by $m^\vee(g)$ is smooth and satisfies
$H^{2,2}_\prim(X,\Z)=0$.  In particular, $X \in D_\Vap$, but $X$ is
not in any Noether--Lefschetz divisor.
\end{thm}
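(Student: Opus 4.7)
The plan is to produce an explicit integral sextic $g \in \Z[x_0,x_1,x_2]$ by computer search, form the cubic $f = m^\vee(g) \in \Z[y_0, \dotsc, y_5]$, verify that $X = V(f) \subset \P^5$ is smooth over $\Q$ (which by \cite[Lem.~1.7]{rv} already places $X \in D_\Vap$), and then use \'etale point counts at a prime of good reduction to force $H^{2,2}_\prim(X, \Z) = 0$.

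Concretely, I fix a small prime $p$ (for instance $p = 2$ or $3$) at which $X$ has good reduction, together with an auxiliary prime $\ell \neq p$. A smooth cubic fourfold has Betti numbers $(1,0,1,0,23,0,1,0,1)$, with the even cohomology outside the middle generated by powers of the hyperplane, so the Lefschetz trace formula reads
$$
\#X(\F_{p^k}) \;=\; 1 + p^k + p^{3k} + p^{4k} + \tr\bigl(\Phi^k \mid H^4_\et(X_{\bar\F_p}, \Q_\ell)\bigr)
$$
for all $k \geq 1$, where $\Phi$ is geometric Frobenius. Counting $\#X(\F_{p^k})$ for $k = 1, \dotsc, 11$ and invoking the Weil functional equation then pin down the characteristic polynomial of $\Phi$ on $H^4_\et$; dividing out the hyperplane factor $t - p^2$ leaves the degree-$22$ primitive characteristic polynomial $P_\prim(t)$. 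By smooth and proper base change, the cycle class map, and Zucker's theorem that the Hodge conjecture holds for cubic fourfolds, one obtains
$$
\mathrm{rk}\, H^{2,2}_\prim(X, \Z) \;\leq\; \#\bigl\{\text{roots of } P_\prim(t) \text{ lying in } p^2 \mu_\infty\bigr\},
$$
counted with multiplicity. Thus if no root of $P_\prim(t)$ has the form $p^2 \zeta$ with $\zeta$ a root of unity, one concludes $H^{2,2}_\prim(X, \Z) = 0$, so $X$ lies on no Noether--Lefschetz divisor. (Integrally, this is fine because $H^4(X,\Z)$ is torsion-free for smooth cubic fourfolds.)

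The main obstacle is computational. A na\"ive enumeration of $\P^5(\F_{2^{11}})$ is infeasible, so the count must exploit the structure of $f$: for instance, by looping over one coordinate and recursing on the resulting cubic threefold in $\P^4$, or by employing more sophisticated zeta-function methods. A secondary difficulty is choosing $g$: if $P_\prim(t)$ happens to acquire a root in $p^2 \mu_\infty$ at every small prime of good reduction for a given $g$, one must either search further or combine data from two primes via a van Luijk--style discriminant comparison to tighten the bound. With a single lucky $g$ and a single lucky $p$, however, the entire proof collapses to computing twelve integers and factoring a degree-$23$ polynomial.
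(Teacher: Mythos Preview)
Your overall plan matches the paper's: exhibit an explicit $g$, reduce modulo a small prime of good reduction, recover the Frobenius characteristic polynomial on $H^4_{\et,\prim}$ from point counts over $\F_{p^k}$ for $k\le 11$ together with the functional equation, and then invoke the Hodge conjecture for cubic fourfolds (Zucker) plus smooth/proper base change to bound $\mathrm{rk}\,H^{2,2}_\prim(X,\Z)$ by the number of root-of-unity eigenvalues. That is exactly the paper's Section~2.

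Where you diverge is in the computational step, and this is where the paper's actual contribution lies. Your suggestion of ``looping over one coordinate and recursing on the resulting cubic threefold in $\P^4$'' does not cut the cost enough: at $p=2$, $m=11$ you are still in the $2^{44}$ range, which is no more practical than the na\"ive $\P^5$ loop the paper dismisses. The paper instead projects $\Bl_l(X)\to\P^3$ from an $\F_2$-rational \emph{line} $l\subset X$, obtaining a flat conic bundle; the fibre-by-fibre count then reduces to counting points on the quintic discriminant $\Delta\subset\P^3$ and its double cover $\tilde\Delta$, and a further projection of $\Delta$ from a singular point brings everything down to a loop over $\P^2$. This is what makes $m=11$ run in under a second. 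The ``more sophisticated zeta-function methods'' you allude to (Kedlaya--Harvey $p$-adic cohomology) are explicitly set aside by the authors as not yet practical for cubic fourfolds.

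Two smaller points. First, the van Luijk two-prime discriminant trick is not needed here: unlike K3 surfaces, there is no parity obstruction, so a single prime with no root-of-unity eigenvalues already gives rank~$0$. Second, the theorem asserts the existence of an \emph{explicit} $g$; the paper supplies one (with specific rational coefficients), checks $f=m^\vee(g)$ directly, and after a rescaling of variables obtains good reduction at $p=2$ --- the last step is not automatic and is part of what the search must produce.
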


We also confirm Ranestad and Voisin's expectations for the other two divisors mentioned above:\footnote{They also studied a fourth
divisor $D_\copl$, not defined using apolarity, but we
were unable to find a suitable cubic in that divisor using the
technique described below.  Probably one could be found by working
modulo 5, but that would forfeit many of the computational advantages
of working modulo 2.}

\begin{thm} \label{thm2} 
There is an explicit cubic fourfold $X \in
D_\IR$, defined over $\Q$, 
with $H^{2,2}_\prim(X,\Z)=0$.  In
particular, $D_\IR$ is not a Noether--Lefschetz divisor.
\end{thm}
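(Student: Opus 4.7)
The plan is to follow the strategy behind Theorem~\ref{thm1}: exhibit an explicit integer cubic fourfold $X$ lying in $D_\IR$, verify its smoothness and the apolarity condition, and then bound the algebraic part of its middle cohomology by point-counting on the good reduction $X_{\F_2}$. Specialization induces an injection from the image of the cycle class map $\CH^2(X_\C)\to H^4(X_\C,\Q_\ell)$ into the analogous image for $X_{\overline{\F_2}}$, so to conclude $H^{2,2}_\prim(X_\C,\Z)=0$ it suffices (using the Hodge conjecture, which is known for cubic fourfolds) to show that the only codimension-two algebraic class on $X_{\overline{\F_2}}$, up to scalars, is $h^2$; this then places $X$ outside every Noether--Lefschetz divisor.

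For the construction, I would fix a smooth rational normal quartic scroll $S\subset\P^5$ (of type $(1,3)$ or $(2,2)$) by writing down a $2\times 4$ matrix of linear forms with small integer entries; the six $2\times 2$ minors $q_1,\dotsc,q_6$ generate its homogeneous ideal $I_S$. Apolarity to $I_S$ imposes six linear conditions on the $56$-dimensional space of cubic forms in $y_0,\dotsc,y_5$, cutting out a $50$-dimensional sublattice of integer cubics apolar to $S$. Inside this sublattice I would search, modulo $2$, for a cubic $f$ defining a smooth hypersurface in $\P^5_{\F_2}$; such $f$ form a Zariski open subset, so a modest random or systematic search should produce a candidate, and smoothness of $X_{\F_2}$ then implies smoothness of $X_\Q$.

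Given a suitable $f$, I would count $\#X(\F_{2^n})$ for $n=1,\dotsc,11$. Together with the Lefschetz trace formula, Poincar\'e duality, and the Weil conjectures, these counts determine the degree-$23$ characteristic polynomial $P(T)\in\Z[T]$ of Frobenius on $H^4(X_{\overline{\F_2}},\Q_\ell)$; its roots have complex absolute value $4$, and one of them is the eigenvalue $4$ coming from $h^2$. Because every algebraic codimension-two class on $X_{\overline{\F_2}}$ is a Frobenius eigenvector with eigenvalue $4\zeta$ for some root of unity $\zeta$, the rank of the image of the cycle class map in codimension two is at most the number of roots of $P(T)$ of that form. If this number equals one, then by the discussion above $H^{2,2}_\prim(X_\C,\Z)=0$, as required.

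The main obstacle is computational: naively counting $\F_{2^{11}}$-points of a hypersurface in $\P^5$ traverses roughly $2^{55}$ points of projective space, so the implementation must exploit the hypersurface structure --- for instance, by iterating over coordinates in batches with precomputed lookup tables over small subfields, or by invoking a $p$-adic point-counting algorithm of Harvey or Costa type. A milder concern is that one may need to test several cubics in the $50$-dimensional apolar family before finding an $f$ whose Frobenius polynomial has no extraneous roots of the form $4\zeta$; but since a Noether--Lefschetz-general cubic is expected to lie in a Zariski open subset of $D_\IR$, a short search should suffice.
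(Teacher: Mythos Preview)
Your overall strategy is exactly the paper's: pick an explicit integer cubic apolar to the $2\times2$ minors of a $2\times4$ matrix (a quartic scroll), check smoothness and good reduction at $2$, count points over $\F_{2^n}$ for $n\le 11$, and verify that the resulting Frobenius characteristic polynomial on primitive $H^4$ has no cyclotomic factor. The only substantive divergence is in how you propose to beat the $2^{55}$ barrier. The paper explicitly avoids $p$-adic cohomology methods (remarking that available implementations are not yet ready for cubic fourfolds), and the vague ``batches with lookup tables'' idea does not by itself change the complexity. Instead, the paper finds an $\F_2$-line $l\subset X$, blows up along it to obtain a flat conic bundle $\Bl_l(X)\to\P^3$, and reduces the count to iterating over $\P^3$ (counting points of the quintic discriminant $\Delta$ and its double cover $\tilde\Delta$ via Arf invariants in characteristic $2$); a further projection from a singular point of $\Delta$ cuts this down to $\P^2$. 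This geometric trick is what makes the computation take seconds rather than months, and it is the one idea your proposal is missing.
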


\begin{thm} \label{thm3} 
There is an explicit cubic fourfold $X \in
D_\rkthree$, defined over~$\Q$, 
with $H^{2,2}_\prim(X,\Z)=0$.  In particular, $D_\rkthree$ is not a
Noether--Lefschetz divisor.
\end{thm}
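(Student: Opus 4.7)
\medskip
\noindent\emph{Plan of proof.} The approach parallels that of Theorems~\ref{thm1} and~\ref{thm2}: exhibit an explicit $X \in D_\rkthree$ defined over $\Q$, and then use point counting on a good reduction modulo a small prime to rule out primitive Hodge classes of type $(2,2)$.

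\smallskip
\noindent\emph{Construction of $X$.} Take the plane $\Pi = V(y_0,y_1,y_2)$ and the disjoint hyperplane $V(y_5)$ in $\P^5$; the saturated ideal of $\Pi \cup V(y_5)$ is $\langle y_0 y_5,\ y_1 y_5,\ y_2 y_5 \rangle$, so the apolarity condition on a cubic $f \in \Sym^3 \C^{6\vee}$ amounts to the three linear equations
\[
\partial_0 \partial_5 f = \partial_1 \partial_5 f = \partial_2 \partial_5 f = 0.
\]
This cuts out a large linear system. I will search by computer for an $f$ with small integer (ideally $0/1$) coefficients whose vanishing locus $X = V(f)$ is smooth over $\Q$ and remains smooth modulo a convenient prime $p$, ideally $p = 2$ as in the first two theorems. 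Smoothness is verified by checking that the Jacobian ideal $(\partial_0 f, \dotsc, \partial_5 f)$ has no common zero in $\P^5$ via a Gr\"obner basis computation.

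\smallskip
\noindent\emph{Point counting and conclusion.} After reducing modulo $p$, I compute $\#X(\F_{p^n})$ for enough $n$ to pin down the reverse characteristic polynomial of Frobenius on $H^4_\et(X_{\overline{\F}_p},\Q_\ell)$, using the known eigenvalues $p^i$ on $H^{2i}_\et$ for $i \ne 2$ together with Poincar\'e duality on the primitive part of $H^4$, which is palindromic of degree $22$; in principle eleven counts suffice. Combining specialization of cycles with Tate's $\ell$-adic cycle class map, and using that the Hodge conjecture holds in codimension two for cubic fourfolds, one bounds $\dim_\Q H^{2,2}(X_\C,\Q)$ above by the number of Frobenius eigenvalues on $H^4_\et$ equal to $p^2$. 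If this number is exactly one, accounted for by the square of the hyperplane class, then $H^{2,2}_\prim(X_\C,\Q) = 0$ and hence $H^{2,2}_\prim(X_\C,\Z) = 0$, so $X$ lies in no Noether--Lefschetz divisor.

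\smallskip
\noindent\emph{Main obstacle.} The principal difficulty is finding an apolar $f$ whose reduction modulo $p$ is simultaneously smooth and ``Frobenius-general'' in the above sense; several candidates in the apolar linear system may need to be tested. If none works modulo $2$, one can move to $p = 3$ or $p = 5$ at a substantial cost in point-counting time, as noted in the footnote above for the sibling divisor $D_\copl$.
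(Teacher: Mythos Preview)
Your plan is essentially the paper's: exhibit an explicit cubic apolar to $\langle y_0 y_5,\, y_1 y_5,\, y_2 y_5\rangle$ with good reduction at $2$, count points over $\F_{2^m}$ for $m\le 11$, and read off the Frobenius characteristic polynomial on primitive $H^4$.

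There is one genuine slip in your Frobenius criterion. Specialization together with the cycle class map only guarantees that classes coming from $\CH^2(X_\C)$ land in the span of eigenvectors whose eigenvalue is a \emph{root of unity} (after the Tate twist), not necessarily equal to $1$: a Hodge class on $X_\C$ need not be represented by a cycle defined over $\Q$, so its specialization to $X_{\bar\F_p}$ need not be $\Phi$-invariant, only $\Phi^n$-invariant for some $n$. Hence ``exactly one eigenvalue on $H^4_\et$ equals $p^2$'' does not by itself force $H^{2,2}_\prim(X_\C,\Z)=0$. The paper closes this gap by verifying that the degree-$22$ characteristic polynomial on the primitive part is irreducible over $\Q$ and has non-integer coefficients, hence has no cyclotomic factor whatsoever. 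You should also be aware that naive point counting in $\P^5(\F_{2^{11}})$ is out of reach in practice; the paper projects $\Bl_l(X)$ from an $\F_2$-line $l\subset X$ to obtain a conic bundle over $\P^3$, reducing the iteration to $\P^3$ (and, with a further trick, to $\P^2$).
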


\noindent Thus it seems that apolarity tends to produce cubic
fourfolds of a different character than those considered by Hassett.  It would be very interesting to know if there
is any connection with rationality.

We follow a strategy developed by van Luijk~\cite{vl} and refined by
Elsenhans and Jahnel~\cite{ej_deg2, ej_duke}, for producing explicit
K3 surfaces of Picard rank~1.  We find an explicit cubic fourfold with
good reduction modulo 2, then count points over $\F_{2^m}$ for
$m=1,2,\dotsc,11$ to determine the eigenvalues of Frobenius acting on
$H^4_\prim(X_{\overline\F_2}, \Q_\ell(2))$, which give a bound on the
rank of $H^{2,2}_\prim(X,\Z)$.  In \S\ref{sec:van_Luijk}, we give the
details of adapting van Luijk's method to cubic fourfolds.

On the one hand, our task is simpler than van Luijk's: since the
geometric Picard rank of a K3 surface over a finite field is
necessarily even, to show that a K3 surface has Picard rank 1, van
Luijk had to work modulo two different primes and compare intersection
forms; but here we need only work modulo one prime.  On the other hand, a
fourfold is much bigger than a surface, and it is infeasible to count
points naively by iterating over $\P^5$.  Nor can we control the
cohomology of $X$ by counting points on an associated K3 surface as in
\cite{abbv} or \cite{hvv}, since there is none.  In \S\ref{sec:alg} we
explain how to exploit the conic bundle structure on the blow-up of
$X$ along a line, so that to count points we only need to iterate over
$\P^3$, and with a little more work, only over $\P^2$.  The same idea
was used to count points on cubic \emph{three}folds by Debarre,
Laface, and Roulleau \cite[\S4.3]{dlr}, who trace it back to Bombieri
and Swinnerton--Dyer \cite{bsd}.  Whereas those papers restrict to odd
characteristic, we find that the hassle of working with conics in
characteristic 2 is more than repaid by the fact that computation in
$\F_{2^m}$ is so fast.

We do not use the $p$-adic cohomology methods of Kedlaya, Harvey, and others \cite{akr,harvey,chk}.  While these methods are surely the way of the future, they are much harder to implement than our algorithm, and the available implementations are not quite ready to handle cubic fourfolds.

In \S\ref{sec:equations}, we give the explicit polynomials and
the point counts needed to prove Theorems~\ref{thm1},
\ref{thm2}, and \ref{thm3}.  In \S\ref{sec:checks}, we conclude
with some remarks about computer implementation and verification.

The existence of
Noether--Lefschetz general cubic fourfolds (and other complete intersections)
defined over $\Q$ was first proved by Terasoma~\cite{terasoma}, although his proof is
not constructive.  Elsenhans and Jahnel gave an explicit example in
\cite[Example~3.15]{ej_duke}, also using point-counting methods.
But the existence of Noether--Lefschetz general cubic fourfolds
with specified algebraic properties is far from clear \emph{a priori}.

\subsection*{Acknowledgements}
We thank J.-L.~Colliot-Th\'el\`ene, E.~Costa, D.~Harvey, B.~Hassett,
A.~Kuznetsov, K.~Ranestad, R.P.~Thomas, A.~V\'arilly-Alvarado, and B.~Viray for
helpful conversations, and the organizers of the Fall 2016 AGNES
workshop at UMass Amherst for their hospitality.  In the course of
this project we used the computer algebra system Macaulay2 \cite{M2}
extensively, and Magma \cite{magma} to a lesser extent.  The second
author was partially supported by NSA Young Investigator Grant
H98230-16-1-0321.

\section{Adaptation of van Luijk's method}
\label{sec:van_Luijk}

In this section we adapt the method developed in \cite{vl} from
K3 surfaces to cubic fourfolds.  We begin with the following
proposition, which is similar to \cite[Cor.~6.3]{vl_older}.  Note that
due to our choice of Tate twist, our Frobenius eigenvalues have
absolute value 1 rather than $q^i$.  

\begin{prop}
\label{prop}
Let $R$ be a discrete valuation ring of a number field $L$ with
residue field $k \cong \F_q$ for $q = p^r$, and let $X$ be a smooth
projective scheme over $R$.  Let $X^\an$ denote
the complex manifold associated to the complex variety~$X_\C$.  Let
$\Phi\colon X_k \to X_k$ be the $r$-th power absolute Frobenius, let
$\ell$ be a prime different from $p$, and let $\Phi^*$ be the
automorphism of
\[ H^{2i}_\et(X_{\bar k}, \Q_\ell(i)) \]
induced by $\Phi \times 1$ on $X_k \times \bar k$.

Then the rank of the image of the cycle class map
\begin{equation} \label{cycle_class_map}
\CH^i(X_\C) \xrightarrow{\ \cl\ } H^{2i}(X^\an, \Z(i))
\end{equation}
is less than or equal to the number of eigenvalues of $\Phi^*$, counted with multiplicity, that are roots of unity.

In particular, if the Hodge conjecture holds for codimension-$i$ cycles on $X$, then the rank of $H^{2i}(X^\an,\Z) \cap H^{i,i}(X^\an)$ 
is bounded above by the number of such eigenvalues.
\end{prop}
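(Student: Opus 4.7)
\smallskip

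\noindent\textbf{Proof plan.} My plan is to chain three comparison principles—Artin's between Betti and $\ell$-adic cohomology, smooth and proper base change for \'etale cohomology, and Fulton's specialization of Chow groups—so that the image of $\CH^i(X_\C)$ in cohomology can be transported to the image of $\CH^i(X_{\bar k})$ in $H^{2i}_\et(X_{\bar k},\Q_\ell(i))$, where the Frobenius-eigenvalue bound is nearly immediate.

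First, I would reduce to $\Q_\ell$-coefficients: the rank of the image of \eqref{cycle_class_map} equals the $\Q_\ell$-dimension of its image in $H^{2i}(X^\an,\Q_\ell(i))$, and the Artin comparison identifies this target with $H^{2i}_\et(X_\C,\Q_\ell(i))$ compatibly with cycle class maps. Because $\bar L \hookrightarrow \C$ is an extension of algebraically closed fields, $H^{2i}_\et(X_{\bar L},\Q_\ell(i)) \to H^{2i}_\et(X_\C,\Q_\ell(i))$ is an isomorphism, and any cycle on $X_\C$ is defined over a finitely generated $L$-subalgebra $A \subset \C$, so specializing at a closed point of $\operatorname{Spec} A$ (whose residue field is algebraic over $L$ by the Nullstellensatz, since $L$ is a number field) produces a cycle on $X_{\bar L}$ with the same class in $\ell$-adic cohomology. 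Hence it suffices to bound the $\Q_\ell$-dimension of the image of $\CH^i(X_{\bar L})$.

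Next, smoothness and properness of $X$ over $R$ give, via smooth and proper base change, a specialization isomorphism
\[ H^{2i}_\et(X_{\bar L},\Q_\ell(i)) \;\xrightarrow{\sim}\; H^{2i}_\et(X_{\bar k},\Q_\ell(i)), \]
and Fulton's Chow specialization $\sigma\colon \CH^i(X_{\bar L}) \to \CH^i(X_{\bar k})$ is compatible with it under cycle class maps. So the problem reduces to bounding the image of $\CH^i(X_{\bar k}) \otimes \Q_\ell$ in $H^{2i}_\et(X_{\bar k},\Q_\ell(i))$.

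Finally, let $V$ denote this image; it is finite-dimensional and $\Phi^*$-stable. Choose a $\Q_\ell$-basis of $V$ consisting of classes of cycles each defined over some $\F_{q^n}$, and let $N$ be the least common multiple of those exponents $n$; then $\Phi^{*N}$ fixes each basis element, so acts as the identity on $V$. Thus $\Phi^*|_V$ has finite order and all its eigenvalues are roots of unity, whence $\dim V$ is at most the number of roots-of-unity eigenvalues of $\Phi^*$ on $H^{2i}_\et(X_{\bar k},\Q_\ell(i))$, counted with multiplicity. The Hodge-conjecture corollary is then formal: the image of the cycle class map and $H^{2i}(X^\an,\Z) \cap H^{i,i}(X^\an)$ have the same rank. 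The main work is organizational—assembling the spreading-out, the Chow specialization, and the \'etale specialization into a single commutative diagram matching cycle classes in $\ell$-adic cohomology—while the Frobenius-periodicity step at the end is the easy part.
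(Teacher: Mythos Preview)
Your proof is correct and follows essentially the same route as the paper: Artin comparison, then descent of the cycle-class image from $X_\C$ to $X_{\bar L}$ via spreading out (the paper phrases this as ``existence of Hilbert schemes''), then Fulton's specialization of Chow groups compatible with the \'etale specialization isomorphism, and finally the observation that algebraic classes over $\bar k$ are fixed by a power of Frobenius. The only cosmetic difference is that the paper inserts an extra stop at $\bar K$, the algebraic closure of the fraction field of the completion $\widehat R$, before specializing to $\bar k$; since $R$ is already a DVR with fraction field $L$, your direct passage from $\bar L$ to $\bar k$ is equally valid and slightly more streamlined.
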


\begin{proof}
The rank of the image of \eqref{cycle_class_map} agrees with the rank of the image of
\[ \CH^i(X_\C) \xrightarrow{\ \cl\ } H^{2i}(X^\an,\Z_\ell(i)). \]
By the comparison theorem between singular and $\ell$-adic cohomology, this agrees with the rank of the image of
\[ \CH^i(X_\C) \xrightarrow{\ \cl\ } H^{2i}_\et(X_\C,\Z_\ell(i)). \]
Now let $K$ be the field of fractions of the completion $\widehat R$, and consider the commutative diagram\footnote{Alternatively we could have embedded $\bar K \hookrightarrow \C$, but we preferred to use the more natural embeddings $\C \hookleftarrow \bar L \hookrightarrow \bar K$.}
\[ \xymatrix{
\CH^i(X_\C) \ar[r]^-\cl & H^{2i}_\et(X_\C,\Z_\ell(i)) \\
\CH^i(X_{\bar L}) \ar[u] \ar[d] \ar[r]^-\cl & H^{2i}_\et(X_{\bar L},\Z_\ell(i)) \ar[u]_\cong \ar[d]^\cong \\
\CH^i(X_{\bar K}) \ar[r]^-\cl & H^{2i}_\et(X_{\bar K},\Z_\ell(i)).
} \]
The right-hand vertical maps are isomorphisms by smooth base change, and while the left-hand vertical maps are typically not isomorphisms, the images of the three horizontal maps agree thanks to the existence of Hilbert schemes, as remarked in \cite[Rem.~46]{cs}.

Next we have a commutative square
\[ \xymatrix{
\CH^i(X_{\bar K}) \ar[d]^\sigma \ar[r]^-\cl & H^{2i}_\et(X_{\bar K},\Z_\ell(i)) \ar[d]^\cong \\
\CH^i(X_{\bar k}) \ar[r]^-\cl & H^{2i}_\et(X_{\bar k},\Z_\ell(i)),
} \]
where the left-hand vertical map is the specialization map for Chow
groups; see Fulton~\cite[Example~20.3.5]{fulton} for the commutativity of the square.  Thus the rank of the image of
the top horizontal map is less than or equal to that of the bottom
one.

Finally we consider the cycle class map after tensoring with $\Q_\ell$
\[ 
\CH^i(X_{\bar k}) \otimes \Q_\ell \xrightarrow{\ \cl\ }
H^{2i}(X_{\bar k},\Q_\ell(i)) 
\]
and recall that cycles on $X_{\bar k}$ are defined over some finite
extension of $k$, hence are fixed by some power of Frobenius, hence
their classes in cohomology are eigenvectors with eigenvalues a root
of unity as in the proof of \cite[Cor.~6.3]{vl_older}.
\end{proof}

In our
application, we will take $R = \Z_{(2)}$, so $L = \Q$, $q = p = 2$, and $K=\Q_2$.

\medskip

Now specialize to the case where $X$ is a cubic fourfold.  The Hodge
conjecture holds for cubic fourfolds
\cite{zucker, murre, voisin_Z_hodge}, so to show that
$H^{2,2}_\prim(X, \Z) = 0$ it is enough to show that no eigenvalue of
$\Phi^*$ acting on
\[ V := H^4_{\et,\prim}(X_{\bar k}, \Q_\ell(2)) \cong \Q_\ell^{22} \]
is a root of unity, or equivalently that the characteristic polynomial
\[ \chi(t) := \det(t \cdot \mathrm{Id}_V - \Phi^*|_V) \]
has no cyclotomic factor.  For this it is enough to show that $\chi$ is irreducible over $\Q$ and that not all its coefficients are integers.

The cohomology of $X$ is
\[ 
H^i_\et(X_{\bar k}, \Q_\ell(i)) = \begin{cases}
\Q_\ell & i=0, \\
\Q_\ell \cdot h & i=2 \\
\Q_\ell \cdot h^2 \, \oplus \, V & i=4 \\
\Q_\ell \cdot h^3 & i=6 \\
\Q_\ell \cdot h^4 & i=8 \\
0 &\text{otherwise,}
\end{cases} \]
where $h$ is the hyperplane class, so by the Lefschetz trace formula we have
\begin{equation} \label{point_count}
\#X(\F_{q^m}) = 1 + q^m + q^{2m}\Bigl(1+ \tr(\Phi^{*m}|_V) \Bigr) + q^{3m} + q^{4m}.
\end{equation}
The method of passing from traces of powers of $\Phi^*|_V$ to the
characteristic polynomial using Newton's identities is discussed in
\cite[\S3]{vl}, \cite[\S3]{ej_deg2}, or \cite[\S6.1]{hvv}.  Thanks to
the functional equation $\chi(t) = \pm t^{22} \chi(t^{-1})$ it is
usually enough to count up to $m=11$.

\section{The algorithm using conic bundles} 
\label{sec:alg} 
How then can we compute the point counts \eqref{point_count} for an
explicit cubic with $q=2$ and $m = 1, 2, \dotsc, 11$?  As we said in
the introduction, it is not feasible to iterate over $\P^5(\F_{2^m})$,
evaluating our cubic polynomial at every point: in Magma this would
take many years, and in a program written optimized specially for the
purpose it would take months, or at best weeks.  Instead we project
from a line to obtain a conic fibration.

Continue to work with a smooth cubic $X$ defined over an arbitrary $\F_q$.  Choose a line $l \subset X$ defined over $\F_q$; by \cite{dlr} such a line always exists for $q = 2$ or $q \ge 5$, and probably for $q=3$ or 4 as well.  Change variables so that $l$ is given by $y_0 = y_1 = y_2 = y_3 = 0$.  Then we can write the equation of $X$ as
\[ A y_4^2 + B y_4 y_5 + C y_5^2 + D y_4 + E y_5 + F, \]
where $A$, $B$, and $C$ are linear in $y_0, \dotsc, y_3$, $C$ and $D$
are quadratic, and $F$ is cubic.  If $A, \dotsc, F$ vanish
simultaneously at some point of $\P^3$ then $X$ contains a plane,
contributing an unwanted Frobenius eigenvalue, so we stop.  Otherwise
we obtain a flat conic bundle
\[ \Bl_l(X) \longrightarrow \P^3_{(y_0:\dotsc:y_3)} \]
with fibers given by the homogenization of the quadratic form above.
Now we use the following.
\begin{prop}
Let $Z$ be an $\F_q$-scheme of finite type, let $\pi\colon Y \to Z$ be
a flat conic bundle, let $\Delta \subset Z$ be the locus
parametrizing degenerate conics, and let $\tilde \Delta$ be the
(possibly branched) double cover of $\Delta$ parametrizing lines in
the fibers of $\pi$.  Then
\begin{equation} \label{conic_bundle_count}
\#Y(\F_q) = (q+1) \cdot \#Z\ +\ q \cdot (\#\tilde\Delta - \#\Delta).
\end{equation}
\end{prop}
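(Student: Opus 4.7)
The plan is to reduce to a fiberwise calculation on $Z(\F_q)$ and then tabulate the contribution of each isomorphism class of conic over $\F_q$. Since $\pi$ is flat, every fiber is a conic in $\P^2$, so
\[ \#Y(\F_q) = \sum_{z \in Z(\F_q)} \#\pi^{-1}(z)(\F_q), \]
and it suffices to partition $Z(\F_q)$ according to the geometric type of the fiber and keep track of how many points each type contributes to $Y$, $\Delta$, and $\tilde\Delta$.

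Over any $\F_q$, a conic in $\P^2$ falls into exactly one of four classes, and this classification is valid in characteristic $2$ as well (using the geometric decomposition into irreducible components rather than the Gram matrix, which is subtle in characteristic $2$). Writing $s$, $a$, $b$, $c$ for the number of $\F_q$-points $z\in Z$ whose fiber is respectively a smooth conic, a union of two distinct $\F_q$-lines, a union of two conjugate $\F_{q^2}$-lines, or a double line, one checks directly that
\[ \#\pi^{-1}(z)(\F_q) = q+1,\ 2q+1,\ 1,\ q+1 \]
in these four cases. Thus $\#Y(\F_q) = s(q+1) + a(2q+1) + b + c(q+1)$, while $\#Z(\F_q) = s+a+b+c$ and $\#\Delta(\F_q) = a+b+c$.

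For the double cover, one has to compute the $\F_q$-points of $\tilde\Delta$ lying over each degenerate fiber. A split reducible conic has two $\F_q$-lines, contributing two $\F_q$-points of $\tilde\Delta$; a non-split reducible conic has two Galois-conjugate lines, contributing zero $\F_q$-points; and a double line contributes exactly one $\F_q$-point (this is where $\tilde\Delta \to \Delta$ is ramified, which is why the cover must be allowed to be branched). Hence $\#\tilde\Delta(\F_q) = 2a + 0 + c$, and substituting everything into the right-hand side of \eqref{conic_bundle_count} gives
\[ (q+1)(s+a+b+c) + q\bigl((2a+c)-(a+b+c)\bigr) = s(q+1) + a(2q+1) + b + c(q+1), \]
which matches $\#Y(\F_q)$.

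There is no serious obstacle; the only point deserving care is the characteristic-$2$ case, where one should confirm that the classification of conics into the four types above is still geometric (so that the count of $\F_q$-points in each fiber is the one asserted) and that the double-line case genuinely contributes a single ramified $\F_q$-point of $\tilde\Delta$, corresponding to the unique line supporting the nonreduced scheme structure. Everything else is bookkeeping.
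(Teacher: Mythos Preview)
Your proof is correct and follows essentially the same approach as the paper: a fiberwise count over $Z(\F_q)$ according to the four types of conic (smooth, split reducible, non-split reducible, double line), with the same point counts $q+1,\ 2q+1,\ 1,\ q+1$ on fibers and $2,\ 0,\ 1$ on $\tilde\Delta$. The only cosmetic difference is that you introduce the variables $s,a,b,c$ and verify both sides agree, whereas the paper writes $\#Y(\F_q) = (q+1)(\#Z - \#\Delta) + (q\cdot\#\tilde\Delta + \#\Delta)$ directly and simplifies.
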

\begin{proof}
A smooth conic over $\F_q$ is isomorphic to $\P^1$, hence has $q+1$ points.  For a singular conic, there are three possibilities:
\setlength \leftmargini {1.5em}
\begin{itemize}
\item a pair of lines defined over $\F_q$, contributing $2q+1$ points;
\item a pair of conjugate lines defined over $\F_{q^2}$, contributing only one $\F_q$-point;
\item a double line, contributing $q+1$ points.
\end{itemize}
The fiber of $\tilde \Delta$ over the relevant point of $\Delta$ consists of 2, 0, or 1 points respectively.  Thus we have
\[ \#Y(\F_q) =
\underbrace{(q+1) \cdot (\#Z - \#\Delta)}_\text{from smooth conics} +
\underbrace{(q\cdot \#\tilde\Delta + \#\Delta)}_\text{from singular conics}, \]
which simplifies to give \eqref{conic_bundle_count}.
\end{proof}

In our case, with $Y = \Bl_l(X)$ and $Z = \P^3$, this yields
\[ \#X(\F_q) = q^4 + q^3 + q(\#\tilde\Delta - \#\Delta) + q + 1. \]
The discriminant locus $\Delta \subset \P^3$ is cut out by the quintic polynomial
\begin{equation} \label{discr}
A E^2 + B^2 F + C D^2 - BDE - 4ACF.
\end{equation}
This formula remains valid in characteristic 2, although of course the last
term vanishes.  The double cover $\tilde\Delta$ can also be described as
the variety of lines on $X$ that meet $l$.\footnote{The topology of $\tilde\Delta$ over $\C$ has been studied in \cite[\S3,
Lemmas~1--3]{voisin_thesis}.  For a generic $l \subset X$, it is a
smooth surface with Hodge diamond

\centering \tiny
\setlength \tabcolsep {2pt}
\begin{tabular}{ccccc}
& & 1 \\
& 0 & & 0 \\
5 & & 50 & & 5. \\
& 0 & & 0 \\
& & 1 \\[1ex]
\end{tabular}
}

So we can iterate over $\P^3$ and count points on $\Delta$ and $\tilde
\Delta$.  To count points on $\tilde \Delta$ in characteristic 2, we
note that if $B = D = E = 0$ then the conic is a double line;
otherwise we compute an Arf invariant: if $B \ne 0$ (resp.\ $D \ne 0$
or $E \ne 0$), then the conic has $2q+1$ points if $AC/B^2$ (resp.\
$AF/D^2$ or $CF/E^2$) is of the form $a^2+a$ for some $a \in \F_q$,
and 1 point if it is not.

This algorithm runs up to $q = 2^{11}$ in about half a minute on the first author's laptop.  But to find the explicit cubics below we had to  search through dozens of candidates, so it was worthwhile to make a further optimization, iterating only over $\Delta$ rather than all of $\P^3$, as follows.

The quintic $\Delta$ is not smooth; in characteristic 2, it is singular
at least along the locus where
\[ B = D = E = 0, \]
which has expected dimension 0 and degree 4.  Suppose this locus contains
an $\F_2$-point $y$.\footnote{In practice this usually happens, although not
always.  That is, there exist smooth cubics $X$ and
$\F_2$-lines $l \subset X$ such that $\Delta_\sing$ has no $\F_2$-point,
but they are relatively rare.  We have not encountered a cubic $X$
such that for \emph{every} $\F_2$-line $l \subset X$, $\Delta_\sing$
has no $\F_2$-point.  We wonder whether any such cubic exists.}
Projecting from $y$, the quintic $\Delta$ becomes a 3-to-1 cover of
$\P^2$, so we can iterate over $\P^2$ and find the three (or fewer)
sheets of the cover at each point with a suitable version of Cardano's
formula \cite[Exercise~14.7.15]{df}.

With this improvement the algorithm runs up to $q = 2^{11}$ in less
than a second, and up to $q = 2^{14}$ in a little more than a minute.
In \S\ref{sec:checks} we make some practical comments about our implementation
of the algorithm, and sanity checks on the output.

\section{The explicit cubics}
\label{sec:equations}

\subsection{Proof of Theorem 1} \ 
\label{sec:thm1}

Let us begin by discussing the map $m^\vee$ from the introduction in very concrete terms, embracing the monomial basis for the polynomial ring rather than working invariantly, and staying in characteristic 0 as long as possible to avoid discussing divided powers.

Let $R = \C[x_0, \dotsc, x_n]$, and let $R_d \subset R$ be the
subspace of homogeneous polynomials of degree $d$.  We identify $R_1$
with its dual via the pairing
\[ \langle x_i, x_j \rangle = \frac{\partial}{\partial x_i} x_j = \delta_{ij}, \]
and extend this to a pairing
\[ R_k \otimes R_d \to R_{d-k} \]
for positive integers $k \le d$, again by differentiation.  If $k=d$ this is a perfect, symmetric pairing.  We have, for example,
\[ \langle x_0 x_1,\, x_0 x_1 \rangle = \tfrac\partial{\partial x_0} \tfrac\partial{\partial x_1} x_0 x_1 = 1, \]
but
\[ \langle x_0^2,\, x_0^2 \rangle = \tfrac\partial{\partial x_0} \tfrac\partial{\partial x_0} x_0^2 = 2, \]
so the monomials form an orthogonal basis for $R_d$ but not an orthonormal basis.  For $k > d$ we set $\langle R_k, R_d \rangle = 0$.

Now with a view toward Theorem \ref{thm1}, let $R = \C[x_0,x_1,x_2]$ and $S = \C[y_0, \dotsc, y_5]$.  The isomorphism
\[ m\colon S_1 \to R_2 \]
given by
\begin{align*}
y_0 &\mapsto x_0^2 &
y_1 &\mapsto x_0 x_1 &
y_2 &\mapsto x_0 x_2 \\
y_3 &\mapsto x_1^2 &
y_4 &\mapsto x_1 x_2 &
y_5 &\mapsto x_2^2.
\end{align*}
induces a map
\[ m\colon S_d \to R_{2d} \]
for all $d$.

Let $g \in R_6$ be given by
\begin{align*}
g &= \frac1{30} x_0^5 x_1
+ \frac16 x_0^4 x_1^2
+ \frac16 x_0^2 x_1^4
+ \frac1{30} x_0 x_1^5
+ \frac1{120} x_1^6 \\
&+ \frac43 x_0^3 x_1^2 x_2
+ \frac23 x_0^2 x_1^3 x_2
+ \frac16 x_0^4 x_2^2
+ 2 x_0^2 x_1^2 x_2^2
+ \frac13 x_0 x_1^3 x_2^2 \\
&+ \frac1{12} x_1^4 x_2^2
+ \frac23 x_0^2 x_1 x_2^3
+ \frac16 x_1^3 x_2^3
+ \frac13 x_0^2 x_2^4
+ \frac1{15} x_1 x_2^5,
\end{align*}
and let $f \in S_3$ be given by
\begin{align*}
f &= 2 y_0^2 y_1
+ 4 y_0 y_1^2
+ 8 y_1^2 y_2
+ 4 y_0 y_2^2
+ 4 y_0^2 y_3
+ 4 y_1^2 y_3 \\
&+ 16 y_0 y_2 y_3
+ 8 y_1 y_2 y_3
+ 8 y_2^2 y_3
+ 4 y_0 y_3^2
+ 2 y_1 y_3^2
+ y_3^3 \\
&+ 16 y_0 y_1 y_4
+ 4 y_1^2 y_4
+ 16 y_1 y_2 y_4
+ 4 y_2^2 y_4
+ 8 y_0 y_3 y_4 \\
&+ 4 y_2 y_3 y_4
+ 8 y_0 y_4^2
+ 2 y_1 y_4^2
+ 2 y_3 y_4^2
+ y_4^3
+ 4 y_0^2 y_5 \\
&+ 8 y_1^2 y_5
+ 8 y_1 y_2 y_5
+ 8 y_2^2 y_5
+ 16 y_0 y_3 y_5
+ 4 y_1 y_3 y_5 \\
&+ 2 y_3^2 y_5
+ 8 y_0 y_4 y_5
+ 6 y_3 y_4 y_5
+ 8 y_0 y_5^2
+ 4 y_4 y_5^2.
\end{align*}
We claim that $f = m^\vee(g)$, i.e.\ that
\[ \langle h, f \rangle = \langle m(h), g \rangle \]
for all $h \in S_3$.  This can be checked tediously by hand, or with the Macaulay2 code given in the ancillary file \verb|thm1.m2|.

Let $X \subset \P^5$ be the hypersurface cut out by $f$.  After substituting
\begin{align*}
y_1 &\mapsto \tfrac12 y_1, &
y_2 &\mapsto \tfrac12 y_2, &
y_5 \mapsto \tfrac12 y_5,
\end{align*}
we obtain a model of $X$ with good reduction modulo 2.  Its reduction contains the line
\[ y_0 + y_3 = y_1 = y_2 + y_3 = y_4 = 0. \] 
The point counts of $X$ over $\F_{2^m}$ are given in Table~\ref{point_counts}.  Thus the characteristic polynomial of $\Phi^*$ acting on $H^4_{\et,\prim}(X_{\bar k}, \Q_\ell(2))$
is
\begin{multline*}
\chi(t) = t^{22} - \frac32 t^{20} + \frac32 t^{18} - t^{16} + \frac12 t^{15} + \frac12 t^{14} - t^{13} + \frac32 t^{11} \\
- t^9 + \frac12 t^8 + \frac12 t^7 - t^6 + \frac32 t^4 - \frac32 t^2 + 1,
\end{multline*}
which is irreducible over $\Q$.  By our discussion in \S\ref{sec:van_Luijk}, this proves Theorem~\ref{thm1}.

\subsection{Proof of Theorem 2} \ 
\label{sec:thm2}

Continue to let $S = \C[y_0, \dotsc, y_5]$.  A homogeneous polynomial $f \in S$ is said to be \emph{apolar} to a homogeneous ideal $I \subset S$ if
\[ \langle i, f \rangle = 0 \qquad \text{for all } i \in I. \]
It is enough to check this on a set of generators for $I$.

Ranestad and Voisin observe \cite[Lem.~1.7]{rv} that a cubic is in the image of $m^\vee\colon R_6 \to S_3$ if and only if it is apolar to ideal generated by the $2 \times 2$ minors of
\[ \begin{pmatrix} y_0 & y_1 & y_2 \\ y_1 & y_3 & y_4 \\ y_2 & y_4 & y_5 \end{pmatrix}, \]
which cuts out a Veronese surface.  This is checked for the previous section's cubic in \verb|thm1.m2|.

For Theorem \ref{thm2}, we take
\begin{align*} f ={}& y_0^3
+ 2 y_1 y_2^2
+ y_2^3
+ y_0^2 y_3
+ 2 y_0 y_1 y_3
+ 8 y_1 y_2 y_3
+ y_0^2 y_4
+ 4 y_1^2 y_4 \\
&+ 8 y_0 y_2 y_4
+ y_2^2 y_4
+ 4 y_2 y_3 y_4
+ y_3^2 y_4
+ 2 y_1 y_4^2
+ y_2 y_4^2
+ y_4^3 \\
&+ 8 y_0 y_1 y_5
+ 2 y_1 y_2 y_5
+ 4 y_1 y_3 y_5
+ 2 y_2 y_3 y_5
+ 4 y_0 y_4 y_5
+ 2 y_1 y_4 y_5 \\
&+ 6 y_3 y_4 y_5
+ y_4^2 y_5
+ y_0 y_5^2
+ y_2 y_5^2
+ y_3 y_5^2
+ y_4 y_5^2.
\end{align*}
This is apolar to the ideal generated by the $2 \times 2$ minors of the matrix
\[ \begin{pmatrix}
y_0 & y_1 & y_3 & y_4 \\
y_1 & y_2 & y_4 & y_5
\end{pmatrix}, \]
which cuts out a quartic scroll.  Apolarity can be checked by hand or with \verb|thm2.m2|.\footnote{Ranestad and Voisin gave a different definition of $D_\IR$ and proved that a cubic of Waring rank 10 (the maximum possible) is in $D_\IR$ if and only if it is apolar to a quartic scroll \cite[Lem.~2.4]{rv}.  Our cubic does have rank 10, as can be checked using \cite[Lem.~3.18]{rv}.  But in fact the rank condition can be ignored: the cubic forms that are apolar to a given quartic scroll form a linear space, in which the general one has rank 10, so $D_\IR$ consists of all cubics apolar to a quartic scroll, with no restriction on rank.  We thank K.~Ranestad for explaining this to us.}

Let $X \subset \P^5$ be the hypersurface cut out by $f$.  After substituting $y_1 \mapsto \frac12 y_1$ we obtain a model of $X$ with good reduction modulo 2.  It contains the line
\[ y_0 = y_2 = y_3 = y_4 = 0. \] 
The point counts of $X$ over $\F_{2^m}$ are given in Table~\ref{point_counts}.  Thus the characteristic polynomial of $\Phi^*$ acting on $H^4_{\et,\prim}(X_{\bar k}, \Q_\ell(2))$
is
\begin{multline*}
\chi(t) = t^{22} + t^{20} + \frac12 t^{19} + \frac12 t^{18} + \frac12 t^{17} - \frac12 t^{14} - \frac12 t^{13} - \frac32 t^{12} - \frac12 t^{11} \\
- \frac32 t^{10} - \frac12 t^9 - \frac12 t^8 + \frac12 t^5 + \frac12 t^4 + \frac12 t^3  + t^2 + 1,
\end{multline*}
which is irreducible over $\Q$.  By our discussion in \S\ref{sec:van_Luijk}, this proves Theorem~\ref{thm2}.

\subsection{Proof of Theorem 3} \ 
\label{sec:thm3}

The cubic fourfold $X$ cut out by
\begin{align*} f ={}& y_0^2 y_1
+ y_0^2 y_2
+ y_0 y_1 y_2
+ y_1 y_2^2
+ y_2^3
+ y_1^2 y_3
+ y_0 y_2 y_3
+ y_0 y_3^2 \\
&+ y_1 y_3^2
+ y_0 y_1 y_4
+ y_0 y_2 y_4
+ y_1 y_2 y_4
+ y_2^2 y_4
+ y_0 y_3 y_4
+ y_1 y_3 y_4 \\
&+ y_2 y_3 y_4
+ y_0 y_4^2
+ y_1 y_4^2
+ y_4^3
+ y_3^2 y_5
+ y_3 y_4 y_5
+ y_4^2 y_5
+ y_4 y_5^2
+ y_5^3
\end{align*}
has good reduction modulo 2.  The polynomial $f$ is apolar to the ideal
\[ \langle y_0 y_5,\, y_1 y_5,\, y_2 y_5 \rangle, \]
as can be checked by hand or with \verb|thm3.m2|.  We do not review the definition of $D_\rkthree$, but only refer to the proof of \cite[Lem.~2.1]{rv} for the fact that this implies $X \in D_\rkthree$.

The reduction of $X$ contains the line
\[ y_0 = y_1 + y_3 = y_2 = y_4 + y_5 = 0. \]
The point counts of $X$ over $\F_{2^m}$ are given in
Table~\ref{point_counts}.  Thus the characteristic polynomial of $\Phi^*$ acting on $H^4_{\et,\prim}(X_{\bar k}, \Q_\ell(2))$
is
\begin{multline*}
\chi(t) = t^{22} - \frac12 t^{21} + \frac32 t^{20} - \frac12 t^{19} - \frac32 t^{16} + \frac12 t^{15} - t^{14} + \frac12 t^{13} + \frac12 t^{12} + \frac12 t^{11} \\
+ \frac12 t^{10} + \frac12 t^9 - t^8 + \frac12 t^7 - \frac32 t^6 - \frac12 t^3 + \frac32 t^2 - \frac12 t + 1.
\end{multline*}
which is irreducible over $\Q$.  By our discussion in \S\ref{sec:van_Luijk}, this proves Theorem~\ref{thm3}.

\begin{table}
\[ \begin{array}{c|r|r|r}
\multirow{2}{*}{m} & \multicolumn{3}{c}{\#X(\F_{2^m})} \\
\cline{2-4}
& \hfill \text{Theorem \ref{thm1}} \hfill & \hfill \text{Theorem \ref{thm2}} \hfill & \hfill \text{Theorem \ref{thm3}} \hfill \\
\hline
1 & 31 & 31 & 33 \\
2 & 389 & 309 & 297 \\
3 & 4\,681 & 4\,585 & 4\,641 \\
4 & 69\,521 & 69\,905 & 70\, 945\\
5 & 1\,082\,401 & 1\,082\,401 & 1\,084\,033 \\
6 & 17\,040\,449 & 17\,050\,689 & 17\,057\,409 \\
7 & 270\,491\,777 & 270\,577\,793 & 270\,525\,953 \\
8 & 4\,311\,818\,497 & 4\,312\,006\,913 & 4\,311\,720\,449 \\
9 & 68\,854\,546\,945 & 68\,854\,448\,641 & 68\,853\,843\,969 \\
10 & 1\,100\,584\,649\,729 & 1\,100\,596\,118\,529 & 1\,100\,585\,936\,897 \\
11 & 17\,600\,762\,873\,857 & 17\,600\,774\,408\,193 & 17\,600\,759\,586\,817\\
\end{array} \]
\smallskip
\caption{Point counts.}
\label{point_counts}
\end{table}

\section{Verification and implementation}
\label{sec:checks}

Our implementation of
the algorithm described in \S\ref{sec:alg} is included as an ancillary file
\verb|count.cpp|.  We double-checked its output very thoroughly:

\setlength \leftmargini {1.5em}
\begin{itemize}
\setlength \itemsep {4pt}
\item For small $m$, we checked the counts over $\F_{2^m}$ using the naive algorithm discussed at the beginning of \S\ref{sec:alg}.

\item We checked the counts up to about $m=9$ with a ``semi-sophisticated'' algorithm that projects from a point rather than a line.

\item We projected from several different lines and got the same counts.

\item After finding the characteristic polynomial one can predict the counts for all $m$.  We checked these up to $m=14$, and even $m=15$ on a computer with much more memory than the first author's laptop.

\item The characteristic polynomial of $\Phi^*$ acting on
\[ H^4_{\et,\prim}(X_{\bar\F_2},\Q_\ell), \]
with no Tate twist, is $4^{22}\chi(t/4)$, and this must have integer coefficients.  But even stronger, we have
\[ H^4_{\et,\prim}(X_{\bar\F_2},\Q_\ell(1)) \cong H^2_{\et,\prim}(F_{\bar\F_2},\Q_\ell), \]
where $F$ is the Fano variety of lines on $X$, so $2^{22}\chi(t/2)$ must
have integer coefficients.  We verified this.

\item We used our program to count points on Elsenhans and Jahnel's cubic \cite[Example~3.15]{ej_duke}, and our numbers agreed with theirs.
\end{itemize}

\smallskip
\noindent We conclude with a few practical comments about our implementation:
\smallskip

\setlength \leftmargini {1.5em}
\begin{itemize}
\setlength \itemsep {4pt}
\item We represented elements of $\F_{2^m}$ as unsigned integers, interpreting the bits as coefficients of a polynomial in $\F_2[x]$ modulo a fixed irreducible polynomial of degree $m$.  Thus addition is given by ``xor'' and multiplication by a well-known algorithm.  

\item We stored multiplication in a lookup table, which sped up the program by an order of magnitude.

\item We also stored division in a lookup table, as well as roots of quadratic and depressed cubic polynomials, which saved us the trouble of writing those algorithms.  This did not start to use an unreasonable amount of memory until $m = 14$.

\item Following \cite[Alg.~15]{ej_deg2} and \cite[\S8]{hvv}, we pre-computed a list of Galois orbit representatives (and orbit sizes) in $\F_{2^m}$, and then touched each Galois orbit of $\P^2$ only once, which sped up the program by a factor of $m$.

\item We did not bother with parallelization, although this problem is ideally suited to it.
\end{itemize}

\bibliographystyle{plain}
\bibliography{apolar}

\newcommand \httpurl [1] {\href{http://#1}{\nolinkurl{#1}}}
\begin{thebibliography}{10}

\bibitem{akr}
T.~Abbott, K.~Kedlaya, and D.~Roe.
\newblock Bounding {P}icard numbers of surfaces using $p$-adic cohomology.
\newblock In {\em Arithmetics, geometry, and coding theory ({AGCT} 2005)},
  volume~21 of {\em S\'emin. Congr.}, pages 125--159. Soc. Math. France, Paris,
  2010.
\newblock Also \href{http://arxiv.org/abs/math/0601508}{math/0601508}.

\bibitem{abbv}
A.~Auel, M.~Bernardara, M.~Bolognesi, and A.~V{\'a}rilly-Alvarado.
\newblock Cubic fourfolds containing a plane and a quintic del {P}ezzo surface.
\newblock {\em Algebr. Geom.}, 1(2):181--193, 2014.
\newblock Also \href{http://arxiv.org/abs/1205.0237}{arXiv:1205.0237}.

\bibitem{bsd}
E.~Bombieri and H.~P.~F. Swinnerton-Dyer.
\newblock On the local zeta function of a cubic threefold.
\newblock {\em Ann. Scuola Norm. Sup. Pisa (3)}, 21:1--29, 1967.

\bibitem{magma}
W.~Bosma, J.~Cannon, and C.~Playoust.
\newblock The {M}agma algebra system. {I}. {T}he user language.
\newblock {\em J. Symbolic Comput.}, 24(3-4):235--265, 1997.
\newblock Computational algebra and number theory (London, 1993).

\bibitem{cs}
F.~Charles and C.~Schnell.
\newblock Notes on absolute {H}odge classes.
\newblock In {\em Hodge theory}, volume~49 of {\em Math. Notes}, pages
  469--530. Princeton Univ. Press, Princeton, NJ, 2014.
\newblock Also \href{http://arxiv.org/abs/1101.3647}{arXiv:1101.3647}.

\bibitem{chk}
E.~Costa, D.~Harvey, and K.~Kedlaya.
\newblock Zeta functions of nondegenerate toric hypersurfaces via controlled
  reduction in $p$-adic cohomology.
\newblock In preparation.

\bibitem{dlr}
O.~Debarre, A.~Laface, and X.~Roulleau.
\newblock Lines on cubic hypersurfaces over finite fields.
\newblock {\em Simons Publication Series}, to appear.
\newblock Also \href{http://arxiv.org/abs/1510.05803}{arXiv:1510.05803}.

\bibitem{df}
D.~Dummit and R.~Foote.
\newblock {\em Abstract algebra}.
\newblock John Wiley \& Sons, Inc., Hoboken, NJ, third edition, 2004.

\bibitem{ej_deg2}
A.-S. Elsenhans and J.~Jahnel.
\newblock K3 surfaces of {P}icard rank one and degree two.
\newblock In {\em Algorithmic number theory}, volume 5011 of {\em Lecture Notes
  in Comput. Sci.}, pages 212--225. Springer, Berlin, 2008.

\bibitem{ej_duke}
A.-S. Elsenhans and J.~Jahnel.
\newblock On the characteristic polynomial of the {F}robenius on \'etale
  cohomology.
\newblock {\em Duke Math. J.}, 164(11):2161--2184, 2015.
\newblock Also \href{http://arxiv.org/abs/1106.3953}{arXiv:1106.3953}.

\bibitem{fulton}
W.~Fulton.
\newblock {\em Intersection theory}, volume~2 of {\em Ergebnisse der Mathematik
  und ihrer Grenzgebiete. 3. Folge.}
\newblock Springer-Verlag, Berlin, second edition, 1998.

\bibitem{M2}
D.~Grayson and M.~Stillman.
\newblock Macaulay2, a software system for research in algebraic geometry.
\newblock Available at \httpurl{www.math.uiuc.edu/Macaulay2/}.

\bibitem{harvey}
D.~Harvey.
\newblock Computing zeta functions of arithmetic schemes.
\newblock {\em Proc. Lond. Math. Soc. (3)}, 111(6):1379--1401, 2015.
\newblock Also \href{http://arxiv.org/abs/1402.3439}{arXiv:1402.3439}.

\bibitem{hassett}
B.~Hassett.
\newblock Special cubic fourfolds.
\newblock {\em Compositio Math.}, 120(1):1--23, 2000.
\newblock Also \httpurl{www.math.brown.edu/~bhassett/papers/cubics/cubic.pdf}.

\bibitem{hassett_survey}
B.~Hassett.
\newblock Cubic fourfolds, {K}3 surfaces, and rationality questions.
\newblock In {\em Rationality problems in algebraic geometry}, volume 2172 of
  {\em Lecture Notes in Math.}, pages 29--66. Springer, Cham, 2016.
\newblock Also \href{http://arxiv.org/abs/1601.05501}{arXiv:1601.05501}.

\bibitem{hvv}
B.~Hassett, A.~V{\'a}rilly-Alvarado, and P.~Varilly.
\newblock Transcendental obstructions to weak approximation on general {K}3
  surfaces.
\newblock {\em Adv. Math.}, 228(3):1377--1404, 2011.
\newblock Also \href{http://arxiv.org/abs/1005.1879}{arXiv:1005.1879}.

\bibitem{mukai}
S.~Mukai.
\newblock Fano 3-folds.
\newblock In {\em Complex projective geometry ({T}rieste, 1989/{B}ergen,
  1989)}, volume 179 of {\em London Math. Soc. Lecture Note Ser.}, pages
  255--263. Cambridge Univ. Press, Cambridge, 1992.
\newblock Also \httpurl{www.kurims.kyoto-u.ac.jp/~mukai/paper/Trieste.pdf}.

\bibitem{murre}
J.~P. Murre.
\newblock On the {H}odge conjecture for unirational fourfolds.
\newblock {\em Indagationes Math. (Proc.)}, 80(3):230--232, 1977.

\bibitem{rs}
K.~Ranestad and F.-O. Schreyer.
\newblock Varieties of sums of powers.
\newblock {\em J. Reine Angew. Math.}, 525:147--181, 2000.
\newblock Also \href{http://arxiv.org/abs/math/9801110}{math/9801110}.

\bibitem{rv}
K.~Ranestad and C.~Voisin.
\newblock Variety of power sums and divisors in the moduli space of cubic
  fourfolds.
\newblock {\em Doc. Math.}, 22:455--504, 2017.
\newblock Also \href{http://arxiv.org/abs/1309.1899}{arXiv:1309.1899}.

\bibitem{terasoma}
T.~Terasoma.
\newblock Complete intersections with middle {P}icard number 1 defined over
  {${\bf Q}$}.
\newblock {\em Math. Z.}, 189(2):289--296, 1985.

\bibitem{vl_older}
R.~van Luijk.
\newblock An elliptic {K3} surface associated to {H}eron triangles.
\newblock {\em J. Number Theory}, 123(1):92--119, 2007.
\newblock Also \href{http://arxiv.org/abs/math/0411606}{math/0411606}.

\bibitem{vl}
R.~van Luijk.
\newblock K3 surfaces with {P}icard number one and infinitely many rational
  points.
\newblock {\em Algebra Number Theory}, 1(1):1--15, 2007.
\newblock Also \href{http://arxiv.org/abs/math/0506416}{math/0506416}.

\bibitem{voisin_thesis}
C.~Voisin.
\newblock Th\'eor\`eme de {T}orelli pour les cubiques de {$\P^5$}.
\newblock {\em Invent. Math.}, 86(3):577--601, 1986.
\newblock Also
  \httpurl{webusers.imj-prg.fr/~claire.voisin/Articlesweb/torelli.pdf}.

\bibitem{voisin_Z_hodge}
C.~Voisin.
\newblock Some aspects of the {H}odge conjecture.
\newblock {\em Jpn. J. Math.}, 2(2):261--296, 2007.
\newblock Also
  \httpurl{webusers.imj-prg.fr/~claire.voisin/Articlesweb/takagifinal.pdf}.

\bibitem{zucker}
S.~Zucker.
\newblock The {H}odge conjecture for cubic fourfolds.
\newblock {\em Compositio Math.}, 34(2):199--209, 1977.

\end{thebibliography}

\end{document}